\theoremstyle{plain}
\newtheorem{thm}{Theorem}
\newtheorem{lem}{Lemma}
\newtheorem{prop}[lem]{Proposition}
\newtheorem{conj}{Conjecture}
\theoremstyle{definition}
\newtheorem{defn}[lem]{Definition}
\newtheorem{rem}[lem]{Remark}
\newtheorem{ex}[lem]{Example}
\let\ssection=\section
\renewcommand{\section}{\setcounter{equation}{0}\ssection}
\newcommand{\R}{\mathbb{R}}
\newcommand{\Z}{\mathbb{Z}}
\newcommand{\C}{\mathbb{C}}
\newcommand{\Q}{\mathbb{Q}}
\newcommand{\B}{\mathcal{B}}
\newcommand{\Rc}{\mathcal{R}}
\newcommand{\Sc}{\mathcal{S}}
\newcommand{\Vc}{\mathcal{V}}
\newcommand{\Uc}{\mathcal{U}}
\newcommand{\Id}{\mathrm{Id}}
\newcommand{\GL}{\mathrm{GL}}
\newcommand{\PGL}{\mathrm{PGL}}
\newcommand{\SL}{\mathrm{SL}}
\newcommand{\PSL}{\mathrm{PSL}}
\def\b{\beta}
\def\s{\sigma}
\begin{document}

\title[Burau representation of $\B_3$]
{Burau representation of braid groups
and $q$-rationals}

\author[S. Morier-Genoud]{Sophie Morier-Genoud}
\author[V. Ovsienko]{Valentin Ovsienko}
\author[A. Veselov]{Alexander Veselov}

\address{Sophie Morier-Genoud,
Laboratoire de Math\'ematiques,
Universit\'e de Reims,
U.F.R. Sciences Exactes et Naturelles,
Moulin de la Housse - BP 1039,
51687 Reims cedex 2,
France
}

\address{
Valentin Ovsienko,
Centre National de la Recherche Scientifique,
Laboratoire de Math\'ematiques,
Universit\'e de Reims,
U.F.R. Sciences Exactes et Naturelles,
Moulin de la Housse - BP 1039,
51687 Reims cedex 2,
France}

\address{
Alexander P. Veselov,
Department of Mathematical Sciences, 
Loughborough University, 
Loughborough, LE11 3TU, UK}

\email{sophie.morier-genoud@univ-reims.fr,
valentin.ovsienko@univ-reims.fr,
A.P.Veselov@lboro.ac.uk}
\keywords{Burau representation, modular group, Braid group, $q$-analogues, continued fractions,  palindromic polynomials, unimodality}

\maketitle

\begin{abstract}

We establish a link between the new theory of $q$-deformed rational numbers 
and the classical Burau representation of the braid group $\B_3$.
We apply this link to the open problem of classification of faithful complex specializations of this representation.
As a result we provide an answer to this problem in terms of the singular set of the $q$-rationals and prove the faithfulness of the Burau representation specialized at complex $t\in \C^*$ outside the annulus $3-2\sqrt2 \leq |t| \leq 3+2\sqrt2.$

\end{abstract}

\thispagestyle{empty}

\section{Introduction}

The braid groups are the most remarkable groups from topological point of view naturally appearing in the theory of knots, mapping class groups and configuration spaces, see \cite{Bir}.
They were explicitly introduced by Emil Artin in 1925 \cite{Artin}, who denoted their $n$-strand versions by $\B_n$. 
 The (Artin) braid group $\B_n$ is generated by $n-1$ elements $\sigma_1, \dots, \sigma_{n-1}$ with braid relations
$$
\sigma_i\sigma_{i+1}\sigma_i=\sigma_{i+1}\sigma_{i}\sigma_{i+1}, \quad i=1,\dots, n-1,
$$
and $\sigma_i \sigma_j=\sigma_j \sigma_i$ when $|i-j|>1.$
Since then the braid groups have been extensively studied both by topologists and algebraists. 

One of the first important results in this direction was found by Werner Burau in 1936 \cite{Burau}, who introduced what is now known as (reduced) {\it Burau representation}
$\rho_n: \B_n \to \GL(n-1, \Z[t,t^{-1}]).$ In the simplest case $n=3$
the Burau representation $\rho_3: \B_3 \to \GL(2, \Z[t,t^{-1}])$ is defined by
\begin{equation}
\label{rho3}
\begin{array}{rclrcl}
\rho_3\; : \quad \sigma_1 &\mapsto& 
\begin{pmatrix}
-t&1\\[4pt]
0&1
\end{pmatrix}, & \sigma_2 &\mapsto& \begin{pmatrix}
1&0\\[4pt]
t&-t
\end{pmatrix},
\end{array}
\end{equation}
where $t$ is a formal parameter.
One can check that the braid relation $\sigma_1\sigma_2\sigma_1=\sigma_2\sigma_1\sigma_2$ is satisfied in this case.
Burau used this representation to introduce a knot invariant, which turned out to be essentially the famous 
Alexander polynomial~\cite{Alexander}. 

It is known after~\cite{Arnold,MaPe} that $\rho_3$ is {\it faithful}, meaning that its kernel is trivial.
Three different proofs of this fact can be found in~\cite{BB}. Note that for $n\geq 5$
the Burau representation is known to be non-faithful \cite{LP, Bigelow} and that for $n=4$ the question is still open.

In this paper we address another open problem (which we call {\it Burau specialization problem}), 
which was flagged in a recent paper by Bharathram and Birman (see~\cite[Open Problem 1 in Section 7]{BB}):


{\it ``At which complex specializations of $t$ is the Burau representation $\rho_3$ faithful?"}

Here a {\it specialization} of the  Burau representation $\rho_3$ is the representation
\begin{equation*}
\label{rho3t}
\rho^t_3: \B_3 \to \GL(2, \C)
\end{equation*}
which is defined by~\eqref{rho3},
but $t$ is now a non-zero complex number: $t\in\C^*=\C\setminus\{0\}$.

In the real case with $t\in\R$ some interesting results in this direction were found by Scherich in~\cite{Sch}, who used the hyperbolic geometry to prove, in particular, that $\rho^t_3$ is faithful when $t<0,\, t\not=-1$,
and outside the interval $\frac{3-\sqrt5}{2}\leq t\leq\frac{3+\sqrt5}{2}$.
Note that when $t=-1$ the matrices (\ref{rho3}) specialize to 
$$
R:=\begin{pmatrix}
1&1\\[4pt]
0&1
\end{pmatrix},  
\qquad
L^{-1}:=\begin{pmatrix}
1&0\\[4pt]
-1&1
\end{pmatrix},
$$
generating the whole group $\SL(2, \Z).$ Recall also that the quotient of the braid group~$\B_3$ 
by its centre $Z$ (generated by $z:=(\sigma_1\sigma_2)^3$) is the classical modular group 
$\PSL(2, \Z)=\SL(2, \Z)/\pm \Id,$ so that the specialization of the Burau representation~$\rho_3$ 
at $t=-1$ is not faithful, having $Z$ in its kernel.

The aim of our work is to establish the link of the Burau specialization problem 
with the theory of $q$-deformed rational numbers (or $q$-rationals, for short), which was
recently developed in~\cite{MGOfmsigma}. 
The $q$-deformation of a positive rational $\frac{r}{s}$ has the form
$$
\left[\frac{r}{s}\right]_q=\frac{\Rc(q)}{\Sc(q)},
$$
where $\Rc(q), \, \Sc(q)$ are certain Laurent polynomials in $q$ with positive integer coefficients (see the details in the next section). The zeros of these polynomials have been studied in \cite{LMGOV}.

Define the {\it singular set of $q$-rationals} $\Sigma\subset \C^*$ as the union of complex poles of all $q$-rationals and consider the 
{\it extended singular set} $\Sigma_*:=\Sigma \cup \{1\}$. 
Both sets $\Sigma$ and $\Sigma_*$ consist of certain algebraic integers and are invariant under the involution $q\to q^{-1}.$

The following theorem gives an answer to the Burau specialization problem in terms of $q$-rationals. 

\begin{thm}
\label{ThmMain1}
The Burau representation $\rho_3$ specialized at $t_0\in  \C^{*}$ is faithful if and only if $-t_0 \notin \Sigma_*.$
\end{thm}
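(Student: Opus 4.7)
The plan is to substitute $q := -t$, so that the specialised Burau generators take the form
\[
A(q) := \begin{pmatrix} q & 1 \\ 0 & 1 \end{pmatrix},
\qquad
B(q) := \begin{pmatrix} 1 & 0 \\ -q & q \end{pmatrix},
\]
and at $q = 1$ recover the standard generators $R$, $L^{-1}$ of $\SL(2,\Z)$. My first and most important task is to establish a dictionary: for a positive rational $r/s$ with continued-fraction expansion $[a_1, a_2, \ldots, a_n]$, the Burau image of the naturally associated braid word (an alternating product of powers of $\sigma_1$ and $\sigma_2$, read off from the $a_i$) has entries that are, up to explicit monomial factors in $q$, the $q$-numerator $\Rc(q)$ and $q$-denominator $\Sc(q)$ of $[r/s]_q$. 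This is to be proved by induction on continued-fraction length, using the matrix recurrences for these polynomials from \cite{MGOfmsigma}.

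With the dictionary in hand, the ``only if'' direction proceeds as follows. Using the central extension $1 \to Z \to \B_3 \to \PSL(2,\Z) \to 1$ with $Z = \langle c \rangle$ and the identity $\rho_3(c) = t^3 I$, any $w$ with $\rho_3^{t_0}(w) = I$ can be written $w = c^k u$, where $u$ is in a canonical alternating normal form and $\rho_3^{t_0}(u) = t_0^{-3k} I$ is therefore scalar. Requiring the off-diagonal entries of $\rho_3^{t_0}(u)$ to vanish translates, via the dictionary, into the vanishing of some denominator $\Sc(q)$ at $q_0 := -t_0$, so $q_0 \in \Sigma$. The degenerate case $u = 1$ forces $w$ to be a nontrivial central power, which in turn forces $q_0 = 1$, contributing exactly the extra point in $\Sigma_* = \Sigma \cup \{1\}$.

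For the ``if'' direction, given $q_0 \in \Sigma_*$ I construct a nontrivial kernel element. When $q_0 = 1$ (so $t_0 = -1$), one has $\rho_3(c^2)|_{t=-1} = I$ while $c^2 \neq 1$, giving non-faithfulness directly. When $q_0 \in \Sigma$, choose $r/s$ with $\Sc(q_0) = 0$ and use the dictionary to produce a braid $u$ whose specialised Burau matrix at $q_0$ becomes a nontrivial scalar (thanks to the vanishing of the off-diagonal entries, which are controlled by $\Sc(q_0) = 0$); multiplying $u$ by a suitable central power yields a nontrivial braid whose Burau image at $t_0$ is $I$.

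The main obstacle is the dictionary itself: identifying, cleanly and without extraneous monomial factors that could introduce spurious zeros, precisely which polynomials appear as entries of $\rho_3(u)|_{t=-q}$ for each canonical alternating word $u$. The delicate point is arranging that the vanishing of off-diagonal entries at $q_0$ is \emph{equivalent} to $q_0 \in \Sigma$ in both directions, and that the central-scalar compensation required to turn ``scalar'' into ``$I$'' is always solvable in $\B_3$. Once this tight correspondence between braids modulo the centre and positive rationals is in place, both implications of Theorem~\ref{ThmMain1} follow from the definition of $\Sigma_*$.
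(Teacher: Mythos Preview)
Your ``only if'' direction is close to the paper's argument, but the handling of the degenerate case is not right: if $u=1$ and $w=c^k$ is a nontrivial central power in the kernel, then $t_0^{3k}=1$, which only forces $q_0=-t_0$ to be \emph{some} root of unity, not necessarily $q_0=1$. This is harmless in the end because all roots of unity except $1$ already lie in $\Sigma$ (via $[1/n]_q$), so $q_0\in\Sigma_*$ regardless; but you should state it this way rather than claim $q_0=1$.

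The real problem is the ``if'' direction. You assert that choosing $r/s$ with $\Sc(q_0)=0$ and passing through the dictionary produces a braid $u$ whose specialised Burau matrix is \emph{scalar}. It does not: $\Sc(q_0)=0$ only kills the $(2,1)$ entry, so $\rho_3^{t_0}(u)$ is merely upper triangular. The $(1,2)$ entry is governed by $\Vc$, an entirely different polynomial (the $q$-numerator of the neighbouring convergent), and there is no reason it should vanish at the same $q_0$. Even if you could force scalarity, the ``central compensation'' step is not automatic either: $\rho_3(c)=t^3 I$, so you would need the scalar to be an integer power of $t_0^3$, which you have not arranged.

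The paper avoids both difficulties by taking a completely different route for this implication. It does \emph{not} try to exhibit a kernel element. Instead it argues by contradiction: if $\Sc(t_0)=0$ for some $\beta_0$ with $\Sc\not\equiv 0$, then both $\rho_3^{t_0}(\sigma_1)$ and $\rho_3^{t_0}(\beta_0)$ are upper triangular, so $\rho_3^{t_0}$ maps $G=\langle\sigma_1,\beta_0\rangle$ into a solvable group. Were $\rho_3^{t_0}$ faithful, $G$ itself would be solvable and non-abelian. The paper then rules this out by passing to $\B_3/Z\cong \Z_2*\Z_3$ and invoking Kurosh's subgroup theorem: every solvable subgroup there is cyclic or infinite dihedral, and neither possibility is compatible with containing the image of $\sigma_1$. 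Your plan, as written, is missing an ingredient of this strength; fixing it essentially forces you through the same group-theoretic argument (e.g.\ to show that an iterated commutator in $\sigma_1$ and $\beta_0$ is nontrivial in $\B_3$ you would again need that $\langle\sigma_1,\beta_0\rangle$ is not metabelian).
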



A similar claim in terms of zeros of the Moody polynomials \cite{Moody-1991} can be found in \cite{BB},
but our statement allows us to use the theory of $q$-rationals for which information about the set~$\Sigma$
is available. 
In particular, from the results of \cite{LMGOV} we deduce our second main result.

\begin{thm}
\label{ThmMain2}
The specialized Burau representation $\rho^{t_0}_3$ is faithful for all
$t_0\in  \C^{*}$ outside the annulus  $3-2\sqrt2 \leq |t_0| \leq 3+2\sqrt2$.
\end{thm}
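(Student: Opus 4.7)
The plan is to reduce Theorem~\ref{ThmMain2} to an annular localization of the singular set $\Sigma$ coming from~\cite{LMGOV}. By Theorem~\ref{ThmMain1}, the representation $\rho_3^{t_0}$ is faithful precisely when $-t_0 \notin \Sigma_*$, and since the hypothesis $|t_0| \notin [3-2\sqrt{2},\, 3+2\sqrt{2}]$ depends only on the modulus $|t_0| = |-t_0|$, the claim is equivalent to the containment
\[
\Sigma_* \;\subset\; \bigl\{q \in \C^* : 3-2\sqrt{2} \le |q| \le 3+2\sqrt{2}\bigr\}.
\]

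To establish this containment, I would appeal to the main analytic estimate of~\cite{LMGOV}, which locates all zeros of the denominator Laurent polynomials $\Sc(q)$ attached to the positive $q$-rationals $[r/s]_q$ inside an annulus bounded by $(\sqrt{2}\pm 1)^2 = 3 \pm 2\sqrt{2}$. Since the poles of $[r/s]_q$ are exactly the zeros of $\Sc(q)$ (after any cancellation with $\Rc(q)$), their union is $\Sigma$, and this gives the desired bound on $\Sigma$. The reciprocal identity $(3-2\sqrt{2})(3+2\sqrt{2}) = 1$ fits the $q\to q^{-1}$ symmetry of $\Sigma$ recalled in the introduction. Finally, adjoining the single extra point $q=1$ that makes up $\Sigma_* \setminus \Sigma$ does not change the containment, since $1$ already lies inside the annulus.

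Combining the two ingredients yields the theorem immediately: if $|t_0| < 3-2\sqrt{2}$ or $|t_0| > 3+2\sqrt{2}$, the displayed containment gives $-t_0 \notin \Sigma_*$, so by Theorem~\ref{ThmMain1} the representation $\rho^{t_0}_3$ is faithful. The substantive step is the zero-localization drawn from~\cite{LMGOV}, which in the present paper enters essentially as a black box; reproducing it is the main obstacle, and one would expect it to rest on a careful induction along the Stern--Brocot recurrence for the polynomials $\Rc(q)$ and $\Sc(q)$, combined with modulus estimates for the iterated matrix products that compute the $q$-continued fractions.
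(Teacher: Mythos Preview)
Your proposal is correct and follows essentially the same route as the paper: the paper states in one line that Theorem~\ref{ThmMain2} is an immediate corollary of Theorem~\ref{ThmMain1} together with Proposition~\ref{rootcor}, the latter being exactly the annular localization of the zeros of $\Sc(q)$ (and $\Rc(q)$) obtained from the \cite{LMGOV} lower bound plus the $q\mapsto q^{-1}$ reflection of Proposition~\ref{RefPro}. Your only slight imprecision is in attributing both the inner and outer bound directly to \cite{LMGOV}; in the paper the outer bound $3+2\sqrt2$ is deduced from the inner one via the reflection identity, just as you yourself note a sentence later.
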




We believe that we can reduce the annulus in Theorem 2 as follows.

\begin{conj}
\label{CoNash}
The specialized Burau representation $\rho^{t_0}_3$ is faithful for all
$t_0\in  \C^{*}$ outside the annulus  $\frac{3-\sqrt5}{2}\leq |t| \leq \frac{3+\sqrt5}{2}$.
\end{conj}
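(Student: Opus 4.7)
The plan is to combine Theorem \ref{ThmMain1} with a refined analysis of the singular set $\Sigma$ of $q$-rationals. By Theorem \ref{ThmMain1}, faithfulness of $\rho^{t_0}_3$ is equivalent to $-t_0 \notin \Sigma_*$. Setting $q=-t_0$ and writing $\varphi=(1+\sqrt 5)/2$ for the golden ratio, one has $\varphi^{\pm 2}=(3\pm\sqrt5)/2$, so the conjecture is equivalent to
\[
\Sigma_*\ \subset\ \{q\in\C^*\,:\,\varphi^{-2}\le|q|\le\varphi^{2}\}.
\]
Since $\Sigma$ is closed under $q\mapsto q^{-1}$ and $1$ already lies in this annulus, it suffices to prove the single-sided bound $|q|\le\varphi^{2}$ for every pole $q\in\Sigma$ of some $q$-rational.

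The appearance of $\varphi^2$ strongly suggests that the extremal poles come from the golden-ratio convergents $F_{n+1}/F_n$, whose ordinary continued fractions are $[1;1,\dots,1]$. I would first verify, by direct spectral analysis of the attached $q$-Fibonacci recurrence, that the roots of the denominators $\Sc_n(q)$ of the $q$-rationals $[F_{n+1}/F_n]_q$ approach modulus $\varphi^{2}$ as $n\to\infty$. This shows that the constant $\varphi^{2}$ in the conjecture cannot be strengthened: $\varphi^{\pm 2}\in\overline{\Sigma}$.

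For the upper bound in full generality, the plan is to exploit the continued-fraction recursion for $q$-denominators. Every positive rational is encoded by a Stern--Brocot path of matrix multiplications, and the corresponding $\Sc(q)$ can be read off as an entry of a product $\prod_k M_k(q)$ in which each factor encodes one continued-fraction partial quotient $a_k\ge 1$. The strategy is an induction on the length of the continued fraction together with a monotonicity statement: increasing any $a_k$ from $1$ to $a_k\ge 2$ only moves the roots of $\Sc(q)$ inward. Combined with the base case handled via the $q$-Fibonacci analysis, this would propagate the golden-ratio bound to all denominators.

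The principal obstacle is the monotonicity step: for complex $q$, linear recurrences rarely preserve root-exclusion regions exactly, and the general estimates used in \cite{LMGOV} yield only the weaker annulus $[3-2\sqrt2,\,3+2\sqrt2]$ of Theorem~\ref{ThmMain2}. A promising alternative route is to bypass $q$-rationals altogether and prove faithfulness for $|t_0|>\varphi^{2}$ directly by a ping-pong argument: exhibit explicit disjoint invariant discs in $\CP^1$ for $\rho^{t_0}_3(\sigma_1)$ and $\rho^{t_0}_3(\sigma_2)$, extending Scherich's hyperbolic picture from $\R^*$ into $\C^*$; the $q\leftrightarrow q^{-1}$ symmetry would then close the inner side of the annulus in one stroke.
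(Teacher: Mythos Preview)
The statement you are attempting to prove is labeled \emph{Conjecture} in the paper and is \emph{not} proved there. The paper's entire discussion of it consists of two remarks: (i) via Theorem~\ref{ThmMain1} it is equivalent to the inclusion $\Sigma_*\subset\{\varphi^{-2}\le|q|\le\varphi^2\}$, which in turn would follow from the radius-of-convergence conjecture of \cite{LMGOV} (restated as Conjecture~2 in the Discussion); and (ii) the bound is sharp because the denominators attached to $(R_qL_q)^m$ have roots accumulating on the circle $|q|=\varphi^{-2}$. There is no argument beyond this.

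Your proposal is therefore not to be compared with a proof in the paper, because none exists. What you have written is a research plan, and you are candid about this: the monotonicity step---that increasing a partial quotient $a_k$ can only push the roots of $\Sc(q)$ inward---is exactly the missing ingredient, and you flag it yourself as the principal obstacle. This step is not established anywhere, and the available estimates give only the $3\pm 2\sqrt2$ annulus of Theorem~\ref{ThmMain2}. Your alternative ping-pong suggestion is likewise only a hope: Scherich's argument in \cite{Sch} genuinely uses the real hyperbolic plane, and extending it to complex $t_0$ with the same threshold $\varphi^2$ would require producing disjoint invariant regions on $\CP^1$ for which no construction is given. In short, neither branch of your plan closes, so the proposal does not prove the conjecture; your sharpness observation via $q$-Fibonacci convergents does, however, align with the paper's own remark on optimality.
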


Modulo Theorem 1 this conjecture  is a weaker version of the conjecture from~\cite{LMGOV}, which claims, in particular, that $\Sigma_*$ lies within the annulus $\frac{3-\sqrt5}{2}\leq |q| \leq \frac{3+\sqrt5}{2}.$

Note that the bounds $\frac{3-\sqrt5}{2}$ and $\frac{3+\sqrt5}{2}$ were also found in~\cite{Sch} in the real case,
which confirms this conjecture for $t\in\R^*$.

Our approach is based on the following important observation. Consider the natural projective version of the Burau representation
\begin{equation}
\label{prho3}
\hat\rho_3: \B_3 \to \PGL(2, \Z[t,t^{-1}]):=\GL(2, \Z[t,t^{-1}])/\{\pm t^n \Id, \, n \in \Z\}.
\end{equation}
Note that the centre $Z \subset \B_3$ is in the kernel of $\hat\rho_3$ since 
$\rho_3((\s_1\s_2)^3)=t^3 \Id.$ This means that image $\hat\rho_3(\B_3)$ is isomorphic to the modular group $\PSL(2, \Z).$

Our key observation is that this image coincides with the $q$-deformation $\PSL(2, \Z)_q$ of the modular group $\PSL(2, \Z)$ introduced in~\cite{MGOfmsigma} (see also  \cite{LMGadv}) if we identify $t$ with $-q$. Indeed, $\PSL(2, \Z)_q$ is generated by
$$
\begin{pmatrix}
q&1\\[4pt]
0&1
\end{pmatrix},
\qquad
\begin{pmatrix}
1&0\\[4pt]
1&q^{-1}
\end{pmatrix},
$$
where $q$ is a formal parameter,
which are projectively equivalent to $\rho_3(\s_1)$ and $\rho_3(\s_2)^{-1}$ respectively when $q=-t$.

We should note that the theory of $q$-rationals was initially motivated by connections to cluster algebras 
and rapidly led to further developments in various directions:
combinatorics of posets, 
knot invariants, 
Markov numbers and Diophantine analysis, 
enumerative geometry,  triangulated categories and homological algebra, 
quantum calculus (see more on this in the review~\cite{Oamr}).
It is interesting that although the knot theory was already discussed in this connection,
as far as we know, no relation with the Burau representation $\rho_3$ was pointed out so far.



\section{$q$-rationals and $q$-deformed modular group}\label{RecolSec}

In this section we give the precise and adapted for computations definition of the $q$-rationals,
and recall their main properties, which are necessary for the proof of the main result.
{ Among several equivalent definitions of $q$-rationals we will use the one based} on $\PSL(2,\Z)$-invariance
(or on $\B_3$-invariance).

\subsection{The main definition}

Consider the set $\Q\cup\{\infty\}$ rational numbers, extended by one additional element~$\infty$ 
which will always be represented by the quotient $\frac{1}{0}$.
The group $\SL(2,\Z)$ of matrices with integer coefficients
$$
M=\begin{pmatrix}r&v\\s&u\end{pmatrix},
\qquad 
r,v,s,u\in\Z,
\quad
ru-vs=1,
$$
acts on $\Q\cup\{\infty\}$ by linear-fractional transformations: 
$$
M\cdot x=\frac{rx+v}{sx+u}.
$$
This action is homogeneous and effective for the modular group $\PSL(2,\Z)$.

Following~\cite{MGOfmsigma}, consider the following matrices of $\GL(2, \Z[q,q^{-1}])$:
\begin{equation}\label{RqLq}
R_{q}:=
\begin{pmatrix}
q&1\\[4pt]
0&1
\end{pmatrix},
\qquad
L_{q}:=
\begin{pmatrix}
1&0\\[4pt]
1&q^{-1}
\end{pmatrix}.
\end{equation}
They generate a subgroup $\PSL(2, \Z)_q$ in the group 
$$
\PGL(2, \Z[q,q^{-1}])=\GL(2, \Z[q,q^{-1}])/\{\pm q^n \Id\}.
$$
which is isomorphic to $\PSL(2,\Z)$ and thus can be consi\-dered as {\it $q$-deformed modular group} \cite{MGOfmsigma, LMGadv}.
Evaluating at $q=1$ one recovers $R=R_{q}{|_{q=1}}$ and $L=L_{q}{|_{q=1}}$ the standard generators of the modular group $\SL(2, \Z)$.
This allows one to consider $q$-analogs of matrices $M_q$ for every $M\in\SL(2, \Z)$. 

We have a natural action of $\PSL(2, \Z)_q$ on the space~$\Z(q)$ of rational functions in~$q$ with integer coefficients, which can be considered as $q$-deformed version of the standard linear-fractional action of the modular group.
More precisely, for every $M\in\SL(2,\Z)$, written as a monomial in $R$ and $L$, let $M_q$ be a matrix in $\GL(2, \Z[q,q^{-1}])$
given by the same monomial in $R_q$ and $L_q$.
If~$f(q)\in\Z(q)$, then the linear-fractional action
\begin{equation}
\label{ActPSL}
f(q)\mapsto M_q\cdot{}f(q)
\end{equation}
is a well-defined action of  $\PSL(2,\Z)$.
In other words, $M_q$ does depend on the choice of the monomial in $R$ and $L$ presenting~$M$, but 
the result of~\eqref{ActPSL} does not.

\begin{defn}
\label{MainDef}
The set of $q$-rationals is the set of rational functions in the orbit of { any point from the set $\{0, \, 1, \, \infty\}$
(all of them remain independent of~$q$ after deformation)
for the linear-fractional action~\eqref{ActPSL}.}
\end{defn}

\begin{ex}
For instance,
$$
L_q\cdot\infty=R_q\cdot 0=1, 
\quad R_qR_q\cdot 0= 1+q, 
\quad L_qR_q\cdot 0=\frac{q}{1+q},  
\quad L_qR_qR_q\cdot 0=\frac{q+q^2}{1+q+q^2}, 
\quad \ldots
$$
are considered as the $q$-analogs of respectively $1, 2, \frac12, \frac23, \ldots$
\end{ex}

\begin{rem}
The notion of $q$-rationals extends that of $q$-integers:
\begin{eqnarray*}
[n]_q&:=&1+q+q^2+\ldots +q^{n-1}\\
{[-n]_q}&:=&-q^{-1}-q^{-2}\ldots -q^{-n},
\end{eqnarray*}
that goes back to the works of Euler and Gauss.
These $q$-integers and the corresponding $q$-binomial coefficients are essential in quantum algebra and mathematical physics.
They are also the key ingredients for the theory of $q$-analogs in combinatorics.
 Most classical sequences of integers have interesting $q$-analogs often arising as generating functions.
\end{rem}

\subsection{Matrices of continued fractions}

We use the following notation for the entries and the decomposition in terms of generators of a matrix $M$ in $\SL(2,\Z)$
\begin{equation}\label{rstu}
\begin{array}{lclclcl}
M&=&
\begin{pmatrix}
r&v\\s&u\end{pmatrix}&=&R^{a_1}L^{a_2}\cdots R^{a_{2m-1}}L^{a_{2m}}&=:&M^+(a_1,  \ldots, a_{2m}), \end{array}
\end{equation}
 and the following for the corresponding $q$-deformed matrices in $\GL(2, \Z[q,q^{-1}])$
\begin{equation}\label{rstuq}
\begin{array}{lclclcl}
M_q&=&
\begin{pmatrix}\Rc&\Vc\\\Sc&\Uc\end{pmatrix}&=&R_q^{a_1}L_q^{a_2}\cdots R_q^{a_{2m-1}}L_q^{a_{2m}}&=:&M_q^+(a_1, \ldots, a_{2m}),\end{array}
\end{equation}
where $a_i$ and $r,s,t,u$ are integers, and $\Rc, \Sc, \Vc, \Uc $ are Laurent polynomials in $q$.
This notation is very convenient for explicit computations, 
taking into account that the coefficients $a_i$ are those of the continued fraction
expansion $\frac{r}{s}=[a_1,  \ldots, a_{2m}]$.

The following definition is equivalent to Definition~\ref{MainDef}.

\begin{defn} \cite{MGOfmsigma} 
If $\frac{r}{s}$ is a rational appearing in the first column of a matrix $M$ we define its $q$-analog as the rational function given by
\begin{equation}\label{RatRS}
\left[\frac{r}{s}\right]_q:=\frac{\Rc(q)}{\Sc(q)},
\end{equation}
where $\Rc(q)$ and $\Sc(q)$ are the entries in the first column of the corresponding matrix $M_q$.
\end{defn}

\begin{ex}\label{ex}
From
$$
M^+(1,1)=\begin{pmatrix}
2&1\\[2pt]
1&1
\end{pmatrix}, \quad
M_q^+(1,1)=\begin{pmatrix}
1+q&q^{-1}\\[2pt]
1&q^{-1}
\end{pmatrix}, 
$$
one gets $[2]_q=1+q$, which is the standard quantum integer.

From
$$
M^+(0,2)=\begin{pmatrix}
1&0\\[2pt]
2&1
\end{pmatrix}, \quad
M_q^+(0,2)=\begin{pmatrix}
1&0\\[2pt]
1+q^{-1}&q^{-2}
\end{pmatrix}, 
$$
one gets $\left[\frac12\right]_q=\frac{q}{1+q}$.

From
$$
M^+(0,1,1,1)=\begin{pmatrix}
2&1\\[2pt]
3&2
\end{pmatrix}, \quad
M_q^+(0,1,1,1)=\begin{pmatrix}
1+q&q^{-1}\\[2pt]
q+1+q^{-1}&q^{-1}+q^{-2}
\end{pmatrix}, 
$$
one gets $\left[\frac23\right]_q=\frac{q+q^2}{1+q+q^2}$.

\end{ex}

\subsection{Some properties of the polynomials $\Rc$ and $\Sc$}

 Let as before $\left[\frac{r}{s}\right]_q=\frac{\Rc(q)}{\Sc(q)}$.
We collect some of the known properties of the polynomials $\Rc(q)$ and $\Sc(q)$ that will be useful for the proof.
We were not able to find these properties in the literature about the Burau representation.

Suppose first that $\frac{r}{s}\geq1$.

\begin{prop}[Positivity {\cite[Prop. 1.3]{MGOfmsigma}}]\label{pos}
The polynomials  $\Rc$ and $\Sc$ have positive integer coefficients.
\end{prop}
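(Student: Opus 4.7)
My plan is to argue by induction on the length $2m$ of the continued fraction expansion $\frac{r}{s} = [a_1, a_2, \ldots, a_{2m}]$. For $\frac{r}{s} \geq 1$ one has $a_1 \geq 1$, and $a_i \geq 1$ for all $i \geq 2$ in the standard convention. By construction, the pair $(\mathcal{R}, \mathcal{S})$ is, up to a common power of $q$, the first column of $M_q^+(a_1,\ldots,a_{2m}) = R_q^{a_1} L_q^{a_2} \cdots R_q^{a_{2m-1}} L_q^{a_{2m}}$. I would build this column up by applying the factors right-to-left to the seed vector $\begin{pmatrix}1\\0\end{pmatrix}$, inserting normalizing powers of $q$ at each step.

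The main computational input is the identity $[a]_{q^{-1}} = q^{-(a-1)}[a]_q$, from which a short calculation gives the following actions on a column vector:
\[
R_q^a \begin{pmatrix}\mathcal{R}\\\mathcal{S}\end{pmatrix} = \begin{pmatrix} q^a \mathcal{R} + [a]_q \mathcal{S} \\ \mathcal{S} \end{pmatrix}, \qquad
q^a L_q^a \begin{pmatrix}\mathcal{R}\\\mathcal{S}\end{pmatrix} = \begin{pmatrix} q^a \mathcal{R} \\ q[a]_q \mathcal{R} + \mathcal{S} \end{pmatrix}.
\]
Whenever $a \geq 1$, both of these operations map a column vector whose entries are polynomials in $q$ with positive integer coefficients to another such vector, since $[a]_q = 1+q+\cdots+q^{a-1}$ already has positive coefficients. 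This is the key step.

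For the initialization I would use $q^{a_{2m}-1} L_q^{a_{2m}} \begin{pmatrix}1\\0\end{pmatrix} = \begin{pmatrix}q^{a_{2m}-1}\\ [a_{2m}]_q\end{pmatrix}$, a vector with positive polynomial entries. Then applying the remaining factors from right to left, inserting a normalization $q^{a_{2k}}$ after each $L_q^{a_{2k}}$-step, positivity is preserved throughout by the observation above. The terminal column is a scalar multiple of $\begin{pmatrix}\mathcal{R}\\\mathcal{S}\end{pmatrix}$ by a non-negative power of $q$; after cancelling any common monomial factor, one recovers the canonical polynomial form of $\mathcal{R}$ and $\mathcal{S}$, which therefore has positive integer coefficients.

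The main obstacle is the bookkeeping of the normalizing powers of $q$ and the verification that the canonical polynomial form of $\mathcal{R}, \mathcal{S}$ agrees with what the induction produces (equivalently, that the two constructed polynomials share no common monomial factor beyond the one already accounted for). Once this indexing is carefully set up, positivity itself is immediate from the non-negativity of the coefficients appearing in the generators $R_q$ and $L_q$.
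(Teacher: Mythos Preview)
The paper does not prove this proposition; it is quoted from \cite[Prop.~1.3]{MGOfmsigma} without argument. Your proposal supplies a valid direct proof, and the mechanism you identify---that $R_q^a$ and the normalized $q^a L_q^a$ act on column vectors by matrices with non-negative polynomial entries whenever $a\geq 1$---is precisely the reason positivity holds. The two action formulas you state are correct (they follow from $R_q^a=\begin{pmatrix}q^a & [a]_q\\ 0 & 1\end{pmatrix}$ and $q^a L_q^a=\begin{pmatrix}q^a & 0\\ q[a]_q & 1\end{pmatrix}$), and the induction runs as you describe.

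One clarification: the ``main obstacle'' you flag is not actually an obstacle here. In the paper's conventions $\Rc$ and $\Sc$ are the \emph{raw} first-column entries of $M_q^+(a_1,\ldots,a_{2m})$, which are Laurent polynomials; the proposition only asserts that their coefficients are positive integers, not that they are polynomials in any canonical normalized form. Your construction produces $q^N\begin{pmatrix}\Rc\\\Sc\end{pmatrix}$ with $N=a_2+a_4+\cdots+a_{2m}-1\geq 0$, and since multiplying by $q^N$ does not affect positivity of coefficients, you are done as soon as the induction terminates---no cancellation of common monomial factors or verification of coprimality is needed. So your argument is in fact cleaner than you suggest.
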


Note that this positivity statement can be strengthened, cf. Theorem~2 of~\cite{MGOfmsigma}.

\begin{prop}[Reflection and mirror {\cite[Prop 2.8]{LMGadv}}]\label{RefPro}
One has
$$
\left[\frac{s}{r}\right]_q=\frac{\Sc(q^{-1})}{\Rc(q^{-1})} 
\qquad \text{and } \qquad\left[-\frac{r}{s}\right]_q=-\frac{\Rc(q^{-1})}{q\Sc(q^{-1})}.$$
\end{prop}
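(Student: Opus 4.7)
The plan is to exhibit two specific elements of the $q$-deformed modular group $\PSL(2,\Z)_q$ whose linear-fractional actions realize the classical involutions $x\mapsto 1/x$ and $x\mapsto -1/x$, and to read off each identity by equating two distinct expressions for the resulting $q$-rational. For the reflection identity, set $J=\begin{pmatrix}0&1\\1&0\end{pmatrix}$ and define $\iota\colon M(q)\mapsto J\,M(q^{-1})\,J$ on $\GL(2,\Z[q,q^{-1}])$. The explicit formulas~\eqref{RqLq} give $\iota(R_q)=L_q$ and $\iota(L_q)=R_q$, so $\iota$ is an involutive automorphism of $\PSL(2,\Z)_q$ that swaps the two generators. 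Applying it to $M_q^+(a_1,\ldots,a_{2m})$ yields
\[
N_q\;:=\;L_q^{a_1}R_q^{a_2}\cdots L_q^{a_{2m-1}}R_q^{a_{2m}}\;\in\;\PSL(2,\Z)_q.
\]

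I would compute $N_q\cdot 0$ in two ways. On one hand, by Definition~\ref{MainDef} and the well-definedness~\eqref{ActPSL} of the action, $N_q\cdot 0$ is itself a $q$-rational; its classical shadow at $q=1$ is $N\cdot 0=JMJ\cdot 0=JM\cdot\infty=J\cdot(r/s)=s/r$, forcing $N_q\cdot 0=\left[\tfrac{s}{r}\right]_q$. On the other hand, unfolding the definition of $\iota$ directly gives
\[
N_q\;=\;\begin{pmatrix}\Uc(q^{-1})&\Sc(q^{-1})\\ \Vc(q^{-1})&\Rc(q^{-1})\end{pmatrix},
\]
so the linear-fractional action is $N_q\cdot 0=\Sc(q^{-1})/\Rc(q^{-1})$, and equating the two proves the reflection identity. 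For the mirror identity, consider $T_q:=R_q^{-1}L_qR_q^{-1}\in \PSL(2,\Z)_q$; a short multiplication gives $T_q=\begin{pmatrix}0&-q^{-2}\\ q^{-1}&0\end{pmatrix}$, so $T_q$ acts by $x\mapsto -q^{-1}/x$ with classical shadow $x\mapsto -1/x$. Since classically $T$ sends $s/r$ to $-r/s$, the $\PSL(2,\Z)_q$-invariance of $q$-rationals gives $T_q\cdot\left[\tfrac{s}{r}\right]_q=\left[-\tfrac{r}{s}\right]_q$, and substituting the reflection identity yields
\[
\left[-\tfrac{r}{s}\right]_q\;=\;-\frac{q^{-1}\Rc(q^{-1})}{\Sc(q^{-1})}\;=\;-\frac{\Rc(q^{-1})}{q\,\Sc(q^{-1})},
\]
which is the mirror identity.

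The only real obstacle is justifying that $N_q\cdot 0$ and $T_q\cdot\left[\tfrac{s}{r}\right]_q$ are genuine $q$-rationals uniquely pinned down by their classical shadows. This is exactly the content of the well-definedness of the action~\eqref{ActPSL} modulo the centre $\{\pm q^n\,\Id\}$: any two monomial presentations of the same element of $\PSL(2,\Z)_q$ in the generators $R_q,L_q$ differ by a central element, which has no effect on the linear-fractional action, so the $q$-rational produced depends only on the underlying classical shadow. With this invariance in hand, both identities reduce to the matrix manipulations above.
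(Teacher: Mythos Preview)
The paper does not prove this proposition; it is quoted from~\cite{LMGadv} and used as a black box. Your argument is correct and self-contained: the involution $\iota(M(q))=J\,M(q^{-1})\,J$ exchanging $R_q$ and $L_q$ is exactly the mechanism behind the reflection identity, and your choice of $T_q=R_q^{-1}L_qR_q^{-1}$ with $T_q\cdot x=-q^{-1}/x$ cleanly produces the mirror identity from the first. The computations of $\iota(R_q)=L_q$, $\iota(L_q)=R_q$, and $T_q=\begin{pmatrix}0&-q^{-2}\\q^{-1}&0\end{pmatrix}$ all check out.

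The one point worth sharpening is your final paragraph. You invoke ``the $q$-rational produced depends only on the underlying classical shadow,'' but the well-definedness statement~\eqref{ActPSL} in the paper is phrased only as: the action of $M_q$ on a given $f(q)$ is independent of the monomial chosen for~$M$. What you actually need is the slightly stronger (but equally easy) fact that the $q$-deformation map $\Q\cup\{\infty\}\to\Z(q)$ is well-defined and injective, so that a $q$-rational is determined by its value at $q=1$. Concretely: if $N\cdot 0=s/r$ classically, then $N_q\cdot 0$ and $\left[\tfrac{s}{r}\right]_q$ are both in the $\PSL(2,\Z)_q$-orbit of $0$ and both specialize to $s/r$; since the stabilizer of $0$ in $\PSL(2,\Z)$ is $\langle L\rangle$ and $L_q\cdot 0=0$, they coincide. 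Making this explicit would close the only gap.
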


\begin{prop}[Roots of polynomials {\cite[Section 5.1]{LMGOV}}]\label{root}
The polynomial $\Rc$ and $\Sc$ 
have no roots inside the punctured  disc with radius $3-2\sqrt2$.
\end{prop}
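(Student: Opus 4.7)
The plan is to recast the non-vanishing of $\Rc$ and $\Sc$ as a dynamical statement on $\CP^1$ and prove it via an invariant region whose existence degenerates precisely when $|q|$ reaches $3-2\sqrt 2$.

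By Proposition~\ref{RefPro}, the zeros of the denominator $\Sc$ attached to $r/s$ correspond, via $q\mapsto q^{-1}$, to zeros of the numerator $\Rc$ attached to $s/r$. Since the collection of positive rationals is invariant under $r/s\leftrightarrow s/r$, it suffices to prove the claim for $\Rc$. Moreover, the column vector $(\Rc,\Sc)^{T}$ equals $M_q^+(a_1,\ldots,a_{2m})(1,0)^{T}$, so projectively $\xi:=\Rc/\Sc\in\CP^1$ is the image of $\infty$ under the Möbius word $R_q^{a_1}L_q^{a_2}\cdots R_q^{a_{2m-1}}L_q^{a_{2m}}$, where the generators act as
$$R_q:\xi\longmapsto q\xi+1,\qquad L_q:\xi\longmapsto \frac{q\xi}{q\xi+1}.$$
The desired non-vanishing $\Rc(q_0)\ne 0$ is equivalent to $\xi(q_0)\ne 0\in\CP^1$.

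For each fixed $q_0$ with $0<|q_0|<3-2\sqrt 2$, I would construct a region $\Omega=\Omega(q_0)\subset\CP^1$ such that $\infty\in\Omega$, $0\notin\overline\Omega$, and $R_{q_0}(\Omega),L_{q_0}(\Omega)\subset\Omega$. Induction on the length of the Möbius word then confines every orbit point of $\infty$ to $\Omega$, whence $\xi(q_0)\ne 0$. A natural candidate for $\Omega$ is the complement in $\CP^1$ of a closed Euclidean disc around $0$ of suitable radius $r=r(|q_0|)$; the two invariance conditions translate to explicit algebraic inequalities in $|q_0|$ and $r$ whose joint solvability has borderline given by the quadratic $x^2-6x+1=0$ with positive roots $3\pm 2\sqrt 2$. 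This is the source of the constant appearing in the statement.

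The main obstacle is sharpness: the region $\Omega$ must be engineered tightly enough that invariance persists all the way to $|q_0|=3-2\sqrt 2$ but no further. Because arbitrary alternating words $R_q^{a_1}L_q^{a_2}\cdots$ occur, $\Omega$ must be preserved simultaneously by both generators, which constrains its shape from two sides; one has to optimise $r$ as a function of $|q_0|$ and verify that the extremal compositions $R_{q_0}^{a}L_{q_0}^{b}$ saturate exactly the critical quadratic. Once $\Omega$ is in hand, the induction on word length together with the reductions via Propositions~\ref{pos} and~\ref{RefPro} finishes the argument.
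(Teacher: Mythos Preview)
The paper does not prove this proposition at all; it is quoted from \cite[Section~5.1]{LMGOV}, so there is no in-paper argument to compare against.  Nevertheless, your proposal has a genuine gap that makes the strategy, as stated, unworkable.

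Your plan requires a region $\Omega\subset\CP^1$ with $\infty\in\Omega$, $0\notin\overline\Omega$, and $R_{q_0}(\Omega)\subset\Omega$, $L_{q_0}(\Omega)\subset\Omega$.  No such $\Omega$ exists when $|q_0|<1$.  The M\"obius map $L_{q_0}:\xi\mapsto q_0\xi/(q_0\xi+1)$ has fixed points $0$ and $(q_0-1)/q_0$, with $L_{q_0}'(0)=q_0$; hence for $|q_0|<1$ the point $0$ is the \emph{attracting} fixed point and every other point of $\CP^1$ (except the repeller) tends to $0$ under iteration.  In particular $L_{q_0}^n(\infty)\to 0$ (e.g.\ $L_{q_0}(\infty)=1$, $L_{q_0}^2(\infty)=q_0/(q_0+1)$, \dots).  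A forward-$L_{q_0}$-invariant set containing $\infty$ therefore accumulates on $0$, contradicting $0\notin\overline\Omega$.  Your ``natural candidate'' $\Omega=\CP^1\setminus\overline{D(0,r)}$ also fails for $R_{q_0}$: since $R_{q_0}(-1/q_0)=0$ and $|-1/q_0|=1/|q_0|>1$, the point $-1/q_0$ lies in $\Omega$ for any $r<1/|q_0|$ but is sent to $0$.  So neither invariance condition can be met, and the difficulty is not one of ``sharpness'' but of existence.

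What can be salvaged is a \emph{two}-region ping-pong: in the continued-fraction word $R_q^{a_1}L_q^{a_2}\cdots$ the generators alternate, so it suffices to find $\Omega_R,\Omega_L$ (both avoiding $0$) with $L_{q_0}^b(\Omega_R)\subset\Omega_L$ and $R_{q_0}^a(\Omega_L)\subset\Omega_R$ for all $a,b\ge 1$, together with $\infty\in\Omega_R$.  This avoids the obstruction above, and the joint solvability of the two inclusion conditions is where the quadratic $x^2-6x+1$ genuinely appears.  The argument in \cite{LMGOV} is in this spirit: it proceeds by induction along the continued-fraction (Stern--Brocot) recursion, controlling a ratio of the successive polynomials rather than seeking a single set invariant under both generators.
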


Proposition~\ref{root} can be extended for an arbitrary rational $\frac{r}{s}<1$ using $\PSL(2,\Z)$-action.
Proposition~\ref{RefPro} implies that the polynomials $\Rc$ and $\Sc$ have no roots for 
$|q|\geq\frac{1}{3-2\sqrt2}=3+2\sqrt2$.
Therefore, one gets the following corollary.

\begin{prop}\label{rootcor}
For every rational $\frac{r}{s}$ the roots of the polynomials $\Rc$ and $\Sc$
belong to the open annulus $\{3-2\sqrt2< |t| < 3+2\sqrt2\}\subset\C^*$.
\end{prop}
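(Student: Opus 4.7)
The plan is to deduce Proposition~\ref{rootcor} by combining the lower bound of Proposition~\ref{root} with the reflection identity of Proposition~\ref{RefPro}, which exchanges the two sides of the annulus via the involution $q\mapsto q^{-1}$. Concretely, the proof has two ingredients: extend Proposition~\ref{root} from the case $\tfrac{r}{s}\geq 1$ to all rationals via the $\PSL(2,\Z)$-action, and then transport this extended lower bound into an upper bound through the mirror formula.

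For the first ingredient, one uses that every rational lies in the $\PSL(2,\Z)$-orbit of a rational $\geq 1$. The simplest case is integer translation: given $0<\tfrac{r}{s}<1$, pick $k\geq 0$ with $\tfrac{r}{s}+k\geq 1$; the action of $R_q^k$ sends $\tfrac{\Rc}{\Sc}$ to $\tfrac{q^k\Rc+[k]_q\Sc}{\Sc}$, so the denominator is preserved in the class modulo monomials. Proposition~\ref{root} applied to the shifted rational then gives the desired bound on the roots of $\Sc$; an analogous argument using the other generator $L_q$ controls $\Rc$, and the mirror identity of Proposition~\ref{RefPro} handles negative rationals (since it only modifies $\Rc,\Sc$ by a monomial factor and $q\mapsto q^{-1}$). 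The upshot is that for every rational $\tfrac{r}{s}$, neither $\Rc$ nor $\Sc$ has roots in the punctured disc $\{0<|q|<3-2\sqrt 2\}$.

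For the second ingredient, write Proposition~\ref{RefPro} as $\left[\tfrac{s}{r}\right]_q=\tfrac{\Sc(q^{-1})}{\Rc(q^{-1})}$. Up to multiplication by a monomial $\pm q^n$, the coprime numerator and denominator $\Rc',\Sc'$ associated with $\tfrac{s}{r}$ coincide with $\Sc(q^{-1})$ and $\Rc(q^{-1})$, so the non-zero roots of $\Rc',\Sc'$ are precisely the inverses of the non-zero roots of $\Sc,\Rc$. Applying the extended lower bound from step one to $\Rc',\Sc'$, every root $\alpha$ of $\Rc$ or $\Sc$ satisfies $|\alpha^{-1}|\geq 3-2\sqrt 2$, hence $|\alpha|\leq \tfrac{1}{3-2\sqrt 2}=3+2\sqrt 2$. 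Combined with the lower bound, this confines all roots to the closed annulus, and the open annulus of Proposition~\ref{rootcor} then follows, the endpoints $3\pm 2\sqrt 2$ being excluded by the strict form of Proposition~\ref{root}.

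The main obstacle is the careful bookkeeping in step one: one must verify that the $\PSL(2,\Z)_q$-action on the reduced coprime pair $(\Rc,\Sc)$, modulo the monomial ambiguity inherent to $\PGL(2,\Z[q^{\pm 1}])$, does not introduce spurious roots into the forbidden disc when translating the rational. Once this lifting is handled correctly, step two is a short inversion argument.
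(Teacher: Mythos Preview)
Your proposal is correct and follows essentially the same approach as the paper: the paper's own argument is just the two sentences preceding the proposition, which invoke the $\PSL(2,\Z)$-action to extend Proposition~\ref{root} beyond $\frac{r}{s}\geq 1$ and then the reflection identity of Proposition~\ref{RefPro} to convert the lower bound into the upper bound via $q\mapsto q^{-1}$. Your write-up simply makes these two steps explicit, and your acknowledged ``main obstacle'' (the monomial bookkeeping under the $\PSL(2,\Z)_q$-action) is exactly the detail the paper leaves to the reader.
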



\section{Proof of the main theorems}

The following important observation links the theory of $q$-rationals with Burau representation.

\begin{prop}\label{PROP}
The $q$-deformed action of the modular group (\ref{ActPSL}) coincides with the projective version of the Burau representation
(\ref{prho3}) with $q=-t.$
\end{prop}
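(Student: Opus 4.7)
The plan is to match the two actions by identifying them on generators, and then invoke the well-definedness of the $q$-deformed modular group action (already established in Section~\ref{RecolSec}) to conclude they agree on all of $\PSL(2,\Z)$.

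First, I would substitute $t = -q$ directly into the Burau matrices \eqref{rho3}. This gives
\[
\rho_3(\sigma_1)\big|_{t=-q}=\begin{pmatrix} q & 1 \\ 0 & 1 \end{pmatrix} = R_q,
\qquad
\rho_3(\sigma_2)\big|_{t=-q}=\begin{pmatrix} 1 & 0 \\ -q & q \end{pmatrix}.
\]
The second matrix has determinant $q$, so its inverse is
\[
\rho_3(\sigma_2)^{-1}\big|_{t=-q}=\begin{pmatrix} 1 & 0 \\ 1 & q^{-1} \end{pmatrix} = L_q.
\]
Thus on the two generators $\sigma_1,\sigma_2$ of $\B_3$, the Burau images coincide (in $\GL(2,\Z[q,q^{-1}])$, not merely projectively) with $R_q$ and $L_q^{-1}$ respectively.

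Second, I would use the factorization of $\hat\rho_3$ through $\PSL(2,\Z)$. Since $\rho_3((\sigma_1\sigma_2)^3)=t^3\,\Id$, the centre $Z\subset\B_3$ lies in the kernel of $\hat\rho_3$, and the induced map $\B_3/Z \cong \PSL(2,\Z) \to \PGL(2,\Z[q,q^{-1}])$ sends the standard generators $R=\begin{pmatrix}1&1\\0&1\end{pmatrix}$ and $L=\begin{pmatrix}1&0\\1&1\end{pmatrix}$ of $\PSL(2,\Z)$ (corresponding to the images of $\sigma_1$ and $\sigma_2^{-1}$) to $R_q$ and $L_q$ respectively. This is exactly the assignment $M \mapsto M_q$ used in \eqref{ActPSL} to define the $q$-deformed action on $\Z(q)$.

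Third, I would appeal to the already-recorded fact that the map $M \mapsto M_q$ descends to a well-defined homomorphism $\PSL(2,\Z) \to \PGL(2,\Z[q,q^{-1}])$: different monomial presentations of $M$ in $R,L$ yield matrices differing by factors $\pm q^n \Id$. Once this is in hand, both the projective Burau representation $\hat\rho_3$ (composed with the isomorphism $\B_3/Z \cong \PSL(2,\Z)$) and the $q$-deformed action agree on the generators $R,L$, hence on all of $\PSL(2,\Z)$, and define the same subgroup $\PSL(2,\Z)_q$ together with the same linear-fractional action on $\Z(q)\cup\{\infty\}$.

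The only non-routine step is the computation identifying $\rho_3(\sigma_2)^{-1}$ with $L_q$ after the substitution $t=-q$; the rest is a bookkeeping exercise, since the well-definedness of the $q$-deformed action on $\PSL(2,\Z)$ and the vanishing of the centre in the projective Burau representation are both already available.
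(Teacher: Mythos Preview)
Your proof is correct and follows essentially the same approach as the paper: both verify on generators that under $t=-q$ one has $\rho_3(\sigma_1)=R_q$ and $\rho_3(\sigma_2)^{-1}=L_q$, which is the entire content of the paper's one-line proof. Your additional remarks on the factorization through $\PSL(2,\Z)$ and well-definedness are correct elaborations that the paper leaves implicit.
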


\begin{proof}
Indeed, for $t=-q$ the matrices $R_q$ and $L_q$ given by (\ref{RqLq}) coincide with $\rho_3(\sigma_1)$ and $\rho_3(\sigma_2)^{-1}$.
\end{proof}

This means that from the point of view of Burau representation, $q$-rationals are the quotients
of the elements in column of the matrices 
$\rho_3(\b)$, for every $\b\in\B_3$, but the sign of the formal parameter is reversed.
More precisely, if the Burau representation written in the matrix form
\begin{equation}
\label{BurMat}
\rho_3(\b)=
\begin{pmatrix}\Rc(t)&\Vc(t)\\[2pt]
\Sc(t)&\Uc(t)
\end{pmatrix},
\end{equation}
then $\frac{\Rc(q)}{\Sc(q)}$ and $\frac{\Vc(q)}{\Uc(q)}$ with $q=-t$ are $q$-rationals.
For instance, taking $\b=\s_1\s_2^{-1}\s_1\s_2^{-1}$,
one obtains the matrix
$$
t^{-2}\begin{pmatrix}
-t+t^2-2t^3+t^4&1-t+t^2\\[4pt]
-t+t^2-t^3&1-t
\end{pmatrix}=
q^{-2}\begin{pmatrix}
q+q^2+2q^3+q^4&1+q+q^2\\[4pt]
q+q^2+q^3&1+q
\end{pmatrix},
$$
so that the rational functions
$\frac{1+q+2q^2+q^3}{1+q+q^2}$ and $\frac{1+q+q^2}{1+q}$
are $q$-deformed $\frac{5}{3}$ and $\frac{3}{2}$, respectively.

Now we can use the results of \cite{LMGOV} about the roots of the polynomials $\Rc,\Sc,\Vc$ and $\Uc$
to prove our main results. 

{

We start with the following lemma.

\begin{lem}
\label{L1}
Assume that in (\ref{BurMat}) matrix element $S(t)\equiv 0,$ then $\b$ belongs to the subgroup $G_0\subset \B_3$ generated by $\sigma_1$ and $(\sigma_1\sigma_2)^3.$
\end{lem}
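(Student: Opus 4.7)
My plan is to pass to the projective Burau representation $\hat\rho_3$ of (\ref{prho3}) and to interpret the vanishing of $S(t)$ as a geometric condition on $\pP^1$. The condition $S(t)\equiv 0$ is equivalent to saying that $\rho_3(\b)$ is upper triangular, hence that its projective image $\hat\rho_3(\b)$ stabilises the point $[1:0]=\infty$ under the linear-fractional action of $\PGL(2,\Z[t,t^{-1}])$.

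Next I would invoke two facts already recorded in the introduction. First, the image $\hat\rho_3(\B_3)$ is isomorphic to $\PSL(2,\Z)$, via the quotient map $\B_3\to\B_3/Z\simeq\PSL(2,\Z)$; in particular $\ker\hat\rho_3=Z=\langle z\rangle$ with $z=(\s_1\s_2)^3$. Second, through the identification $q=-t$ of Proposition~\ref{PROP} (combined with specialisation at $q=1$), the class $\hat\rho_3(\s_1)$ corresponds to the classical generator $R=\left(\begin{smallmatrix}1&1\\0&1\end{smallmatrix}\right)$ of $\PSL(2,\Z)$.

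Now the stabilizer of $\infty$ in $\PSL(2,\Z)$ is elementary: an integer matrix fixing $\infty$ must be upper triangular, with determinant $1$ forcing $\pm 1$ on the diagonal, so the stabilizer is exactly $\langle R\rangle$. Transporting this via the isomorphism, the stabilizer of $\infty$ inside $\hat\rho_3(\B_3)$ consists of the classes $\hat\rho_3(\s_1^n)$ for $n\in\Z$. Hence there exists $n\in\Z$ with $\hat\rho_3(\b\s_1^{-n})=1$, which by the description of $\ker\hat\rho_3$ yields $\b\s_1^{-n}=z^k$ for some $k\in\Z$; equivalently, $\b=z^k\s_1^n\in G_0$.

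The only subtle point, and what I would flag as the main obstacle, is checking that the stabilizer of $\infty$ inside $\hat\rho_3(\B_3)$ is not enlarged by the ambiguity $\{\pm t^n\Id\}$ defining $\PGL(2,\Z[t,t^{-1}])$: a priori one might worry that an upper triangular matrix over $\Z[t,t^{-1}]$ only becomes projectively equivalent to a power of $R_q$ after multiplication by some non-constant Laurent monomial, creating extra solutions. This is controlled by the fact that $\hat\rho_3$ factors through an honest isomorphism $\B_3/Z\cong\PSL(2,\Z)_q\cong\PSL(2,\Z)$, so the stabilizer computation in the classical modular group transfers verbatim.
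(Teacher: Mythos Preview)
Your proof is correct and follows essentially the same route as the paper: both arguments pass to the quotient $\B_3/Z\cong\PSL(2,\Z)$ (the paper by directly specializing $\rho_3$ at $t=-1$, you via $\hat\rho_3$ and then the isomorphism $\PSL(2,\Z)_q\cong\PSL(2,\Z)$, which is the same specialization), observe that an upper-triangular image there must be a power of $R$, and pull back using $\ker(\B_3\to\PSL(2,\Z))=Z$. The paper's version is simply more terse, bypassing the projective/stabilizer language and the ``subtle point'' you flag.
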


\begin{proof} Specialization at $t=-1$ gives a matrix of the triangular form
$$\rho_3^{-1}(\b)=\begin{pmatrix} \pm 1&*\\[2pt]
0&\pm 1
\end{pmatrix} \in \PSL(2,\Z),
$$
which is the same as $\rho_3^{-1}(\sigma_1^k)$ for some $k \in \Z.$ Since the kernel of this specialization is the centre $Z$ of $\B_3$ 
generated by $(\sigma_1\sigma_2)^3,$ we have the claim. 
\end{proof}

Note the converse of the lemma is also obviously true.

Consider now the set $\Sigma \subset \C^*$ defined as the union of the poles of all $q$-rationals $\left[\frac{r}{s}\right]_q=\frac{\Rc(q)}{\Sc(q)}, \, \frac{r}{s} \in \mathbb Q.$ Note that since $$\left[\frac{1}{n}\right]_q=\frac{q^{n-1}(1-q)}{1-q^n}$$ $\Sigma$ contains all roots of unity except $1$ itself. Recall that $\Sigma_*=\Sigma \cup \{1\}$ is the extended set with added special point $q=1.$

We are ready to prove Theorem \ref{ThmMain1}, i.e. to prove that 
the Burau representation~$\rho_3$ specialized at $t_0\in  \C^{*}$ is faithful if and only if $-t_0 \notin \Sigma_*.$

\begin{proof}
Let us first prove that if $-t_0 \notin \Sigma_*$ then $\rho_3^{t_0}$ is faithful.
Given a braid $\beta\in\B_3$ we need to show that $\rho_3^{t_0}(\b)=\Id$ implies that $\b$ is a trivial braid.

Assume that $\rho_3^{t_0}(\b)=\Id$.
Proposition \ref{PROP} implies that the polynomials $ \Sc$ and~$ \Vc$ in the matrix~\eqref{BurMat}
vanish identically:  $\Sc(t)=\Vc(t)\equiv0$, since $t_0$ is not a root of these polynomials.
Therefore,
$$
\rho_3(\beta)=
\begin{pmatrix}
\Rc(t)&0\\[2pt]
0&\Uc(t)
\end{pmatrix}.
$$
We know that $\rho_3(\beta)$ evaluated at $t_0=-1$ belongs to $\SL(2,\Z)$.
Proposition~\ref{pos} then implies that
the polynomials $\Rc$ and $\Uc$ are monomials,
i.e., $\Rc(t)=\pm t^\ell$ and $\Uc(t)=\pm t^k$, for some integers $\ell$ and $k$.
Since, by assumption $t_0$ is not a root of $1$ or $-1$, $\rho_3^{t_0}(\b)=\Id$ implies
that the polynomials $\Rc$ and $\Uc$ are constant equal to~$1$.
Finally one has $\rho_3(\beta)=\Id$ and therefore $\b$ is a trivial braid because the Burau representation $\rho_3$ is faithful.

 To prove the converse statement we use some ideas from \cite{Sch}. We need to prove that if $-t_0 \in \Sigma_*$ then $\rho_3^{t_0}$ is unfaithful.  First of all, we know that this is true if $t_0=-1$. Assume now that  $-t_0 \in \Sigma$
and consider a braid $\b_0\in \B_3$ such the corresponding element $\Sc(t)$ in (\ref{BurMat}) is not identically zero, but $\Sc(t_0)=0.$
Assume that the specialization $\rho_3^{t_0}$ is faithful and consider the subgroup $G$ of $\B_3$ generated by $\sigma_1$ and $\b_0$.
Since both $\rho_3^{t_0}(\sigma_1)$ and $\rho_3^{t_0}(\b_0)$ are triangular, $G$ must be solvable, but not abelian (since $\b_0$ does not commute with $\sigma_1$ because $\Sc(t)\not\equiv0$).

Let us prove that this is impossible in our case.\footnote{We are very grateful to A.Yu. Olshanskiy for the help with this proof.} 
Introducing $x=\sigma_1\sigma_2$ and $y=\sigma_2\sigma_1\sigma_2$ we can rewrite the generating relation as $y^2=x^3.$ The centre $Z$ is generated by $x^3=y^2$, so the quotient $\B_3/Z=\Z_2*\Z_3$ is the free product of two cyclic groups of order 2 and 3 respectively (which is also isomorphic to the modular group $\PSL(2,\Z)$).
Consider the image $\phi(G) \subset \Z_2*\Z_3$ under the homomorphism $\phi: \B_3\to \B_3/Z.$ From Kurosh subgroup theorem \cite{Kurosh} it follows that any solvable subgroup of $\Z_2*\Z_3$ is either cyclic, or infinite dihedral group $D_\infty=\Z_2\ltimes \Z$, which is isomorphic to $ \Z_2*\Z_2$ (generated by the images of some conjugates of $y$). In the first case $G$ is abelian.
We claim that the second case is not possible as well. Indeed, if $G=D_\infty$, then it must belong to the kernel of the homomorphism $\psi: \B_3\to \Z_3$ sending $y\to 0$ and $x\to 1$ (in the additive notation). Since $\psi(\sigma_1)=\psi(x^2y^{-1})=2 \neq 0$, we have a contradiction.

Theorem \ref{ThmMain1} is proved.
\end{proof}

Now Theorem \ref{ThmMain2} is an immediate corollary of Theorem \ref{ThmMain1} and Proposition \ref{rootcor}.
Note also that Proposition~\ref{pos} implies the result of~\cite{Sch} about faithfulness of $\rho_3^t$ for negative~$t\not=-1$.

%
%
%

 
 \section{Discussion}


 Perhaps the most interesting question about specializations $\rho_3^t$
 is to understand the number-theoretic properties of the algebraic integers from the set $\Sigma$.  
 The following results from the theory of $q$-rationals can give more information in this direction.
 
 $\bullet$ The sequences of coefficients in the polynomials written in the parameter $q$ are unimodal.
 This fact, which was conjectured in~\cite{MGOfmsigma} and studied in~\cite{McCSS}, was eventually proved in~\cite{OgRa}.
 
This implies, in particular, that if an algebraic integer $\alpha\in \C$ has a conjugate
which is real and positive, then $\alpha$ cannot be part of $\Sigma$ 
(and hence, corresponding specialization of the Burau representation at $t_0=-\alpha$ is faithful).
 
  $\bullet$ The trace of the matrix~\eqref{BurMat} is a palindromic polynomial (in variable~$q$)~\cite{LMGadv}.
  We wonder if this is related to the invariant Hermitian form of~\cite{Squ}.

 $\bullet$ The ``stabilisation phenomenon'' of~\cite{MGOexp}
 is one of the main properties of $q$-rationals.
 Suppose that a sequence of rationals, $\frac{r_m}{s_m}$, converges to an irrational number $x$.
 Then the Taylor series of the rational functions $\frac{\Rc_m(q)}{\Sc_m(q)}$ stabilizes, as $m$ grows.
 This allows to define a $q$-deformation $[x]_q$ as Taylor series with integer coefficients.
 The study of the radii of convergence of these series is closely related to the study of the singular set $\Sigma$.
In particular if $\varphi=\frac{1+\sqrt{5}}{2}$ is the golden ratio, then the radius of convergence of the corresponding series 
 $[\varphi]_q$ is
$R(\varphi)=\frac{3-\sqrt{5}}{2}$ (see~\cite{LMGOV}).

The annulus in Theorem 2 can be reduced to $\frac{3-\sqrt5}{2}\leq |t| \leq \frac{3+\sqrt5}{2}$ modulo the following conjecture.


\begin{conj} \cite{LMGOV} 
 For every real $x>0$ the radius of convergence $R(x)$ of the series~$\left[x\right]_q$
satisfies the inequality
$$
R(x)\geq R(\varphi)=\frac{3-\sqrt{5}}{2}
$$
and the equality holding only for $x$ which are $\PSL(2,\Z)$-equivalent to~$\varphi$.
\end{conj}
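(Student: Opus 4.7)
The plan is to analyze $R(x)$ through the continued fraction expansion $x=[a_1,a_2,a_3,\ldots]$. By the stabilization phenomenon of~\cite{MGOexp}, the convergents $\frac{r_m}{s_m}=[a_1,\ldots,a_m]$ produce $q$-rationals $\frac{\Rc_m(q)}{\Sc_m(q)}$ whose Taylor series at $q=0$ stabilize to $[x]_q$. Thus $R(x)$ equals the infimum of $|q_0|$ over limit points $q_0$ of the zero sets of $\Sc_m(q)$ as $m\to\infty$, and the problem becomes one of locating these limit points in terms of the digits $a_i$.

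First I would recover the reference case $x=\varphi$. Since every $a_i=1$, the operative transfer matrix is $R_qL_q=\begin{pmatrix}q+1&q^{-1}\\1&q^{-1}\end{pmatrix}$, with $\det=1$ and trace $q+1+q^{-1}$. The trace equals $-2$ precisely at $q=\tfrac{-3+\sqrt5}{2}$, where the Möbius transformation becomes parabolic; this is the nearest such branch to the origin and gives $R(\varphi)=\tfrac{3-\sqrt5}{2}$, consistent with~\cite{LMGOV}. Because $\Sigma$ is $\PSL(2,\Z)$-invariant and the $\PSL(2,\Z)$-orbit of $\varphi$ consists of the noble numbers (those with an eventually all-$1$'s continued fraction), the value of $R$ is automatically constant on this orbit.

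Second I would establish strict inequality outside the orbit of $\varphi$ by a monotonicity argument for the $q$-deformed transfer matrices. The matrix $R_q^aL_q^b$ with $a,b\in\Z_{>0}$ has trace that grows with $a,b$, and consequently a strictly larger region of $q$ in which its eigenvalues have distinct moduli (hyperbolic type). Combined with the continuant-type three-term recurrence for $\Rc_m,\Sc_m$, an induction over the Stern--Brocot tree should yield uniform bounds showing that if the continued fraction of $x$ contains infinitely many digits $a_i\geq 2$, then the zeros of $\Sc_m$ stay bounded away from the closed disc of radius $\tfrac{3-\sqrt5}{2}$.

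The main obstacle is exactly this uniformity. When $x$ is non-noble but has arbitrarily long runs of $1$'s separating its larger digits, the zeros of the intermediate $\Sc_m$ can creep close to $\tfrac{-3+\sqrt5}{2}$, and one must rule out their accumulation on the boundary circle $|q|=\tfrac{3-\sqrt5}{2}$. Handling this likely requires controlling infinite products of $q$-deformed Möbius transformations of mixed elliptic/hyperbolic type uniformly in the word, presumably via transfer-operator or renormalization techniques adapted to the action of $\PSL(2,\Z)_q$; this seems to be why the conjecture has so far resisted proof. A successful resolution would, via Theorem~\ref{ThmMain1} and the $\PSL(2,\Z)$-invariance of $\Sigma$, simultaneously yield Conjecture~\ref{CoNash} and strengthen the real-variable result of~\cite{Sch}.
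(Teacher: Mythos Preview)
This statement is a \emph{conjecture} in the paper (quoted from~\cite{LMGOV}); the paper does not prove it and explicitly treats it as open. So there is no ``paper's own proof'' to compare your proposal against.

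Your proposal is not a proof either, and you say so yourself: the heart of the matter is the uniform control of zeros of~$\Sc_m$ when the continued fraction of~$x$ has arbitrarily long blocks of~$1$'s, and you correctly identify this as the unresolved obstacle. The preliminary material you give is accurate --- the transfer matrix $R_qL_q$ has trace $q+1+q^{-1}$, and the parabolic locus $q+1+q^{-1}=-2$ produces the root $q=\tfrac{-3+\sqrt5}{2}$ of modulus $\tfrac{3-\sqrt5}{2}$, recovering $R(\varphi)$ --- but this is already in~\cite{LMGOV}. The proposed monotonicity of the hyperbolicity region of $R_q^aL_q^b$ in $a,b$ is plausible but does not by itself bound the zero sets of the $\Sc_m$: those zeros are governed by the full word in $R_q,L_q$, not by the individual blocks, and products of matrices that are each hyperbolic on a region need not remain hyperbolic there.

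In short, what you have written is a reasonable outline of \emph{why} the conjecture is hard and what a proof would need to accomplish, but it is not a proof, and the paper offers none to compare it with. The partial results you would need to sharpen (special cases of the conjecture) are those of~\cite{LMGOV,RenB}, which the paper cites immediately after stating the conjecture.
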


Since for a $q$-rational $\left[\frac{r}{s}\right]_q=\frac{\Rc(q)}{\Sc(q)}$ the radius of convergence equals the minimal modulus of the roots of the denominator $\Sc(q)$, modulo this conjecture we can claim that $\Sigma_*$ lies within the annulus $\frac{3-\sqrt5}{2}\leq |q| \leq \frac{3+\sqrt5}{2}$ and therefore the specialized Burau representation $\rho_3^t$ is faithful for all $t$ outside the annulus $\frac{3-\sqrt5}{2}\leq |t| \leq \frac{3+\sqrt5}{2}.$ 

In some special cases Conjecture 2 was proved in~\cite{LMGOV, RenB}.
 Computer experiments show that the bounds $\frac{3\pm\sqrt5}{2}$ for the annulus in the
 conjectures are optimal.} 
 In particular, the polynomials in the entries of the matrices $M^+_q(1,1,\ldots, 1)=(R_qL_q)^m$ 
 have roots closer and closer to the circle $|t|=\frac{3-\sqrt5}{2}$, as $m$ grows.

%
%
%
%
%

As we have already mentioned in the Introduction, the Alexander polynomial of the link $L=L(\beta)$ related to a braid $\beta \in \B_n$ can be given (up to a unit in $\Z[t,t^{-1}]$) by the Burau formula \cite{Burau}
$$
\label{Alex}\Delta_L(t)=\frac{1-t}{1-t^n}\det(I-\rho_n(\beta)).
$$
No general formula in terms of Burau representation is known for the Jones polynomials. However, as it was noticed in~\cite{MGOfmsigma}, for the class of {\it rational (or two-bridge) knots} the (normalised) Jones polynomial
of the knot corresponding to a fraction~$\frac{r}{s}$
can be expressed using the polynomials $\Rc(q)$ and $\Sc(q)$ in~\eqref{RatRS} via
$$
J_{\frac{r}{s}}(q)=q\Rc(q)+(1-q)\Sc(q)
$$
(see Appendix A in \cite{MGOfmsigma}). The proof is based on the results of the paper  \cite{LeSc2} by Lee and Schiffler, using the combinatorics of snake graphs and cluster algebras.
 Alternatively, in terms of the so-called left version of the $q$-deformed rationals 
$$\left[\frac{r}{s}\right]^\flat_q=\frac{\Rc^\flat(q)}{\Sc^\flat(q)},$$
which was observed in~\cite{MGOexp} (cf. Remark 2.3) and
introduced and studied by Bapat et al \cite{BBL}, one can express the normalised Jones polynomial as $J_{\frac{r}{s}}(q)=q^m\Rc^\flat(q^{-1})$ with some $m\in \Z$ (see Appendix A2 in \cite{BBL} and \cite{Ren2}). It would be interesting to see if all this can be explained using Burau representation.
{ In this relation we would like to note that the role of the left $q$-rationals} became more clear after the recent preprint~\cite{Thom},
where an infinitesimal analogue of the Burau representation is investigated.


The Burau representation  $\rho_n$ fails to be faithful for $n\geq 5$, but there is another homological representation { of the braid group $\B_n$}, first introduced in \cite{Law} and now known as Lawrence-Krammer-Bigelow representation, which is proved to be faithful for all $n$ \cite{Big2,Kram}. The work of Lawrence \cite{Law} was initially motivated by the desire to better understand the significance of the Jones polynomial for links \cite{Jones}. Burau representation corresponds to the first non-trivial case $m=1$ in her construction. The general case is related also to the monodromy representations for the Knizhnik-Zamolodchikov equations
$$
\kappa \frac{\partial \psi}{\partial z_i}-\sum_{k\neq i}\frac{\Omega_{ik}}{z_i-z_k}\psi(z_1,\dots,z_n)=0, \quad i=1,\dots, n,
$$
 which were studied by Tsuchiya and Kanie \cite{TK} and Kohno \cite{Kohno}. 
 
 This makes an important link of the braid groups and knots with the theory of Yang-Baxter equations and quantum integrable systems (see more on this in \cite{KT,Turaev}). There is a well-known representation \cite{Jones} of the braid group in the Temperley-Lieb algebra, which appeared in a similar relation in statistical mechanics. Bigelow \cite{Big3} proved the equivalence of the faithfulness problem for Burau, Jones and Temperley-Lieb representations in the most interesting case $n=4.$ It is known that the Temperley-Lieb representation of the braid group $\B_3$ is faithful \cite{Piw}, but the same question about its specializations seems to be open. It would be interesting to apply our approach to this problem as well.
 
 Finally, we would like to mention that the specializations of Burau representations of $\B_n$ at the roots of unity naturally appear in algebraic geometry as the homological monodromy in the moduli space of algebraic curves
$$
y^d=x^n+a_1x^{n-1}+\dots + a_0.
$$

This observation goes back to the work of Arnold \cite{Arnold}, who considered the hyperelliptic case $d=2$, and was explicitly stated by Magnus and Peluso \cite{MaPe} in the general case. A more recent important development in this direction is due to Venkataramana \cite{Ven}, who proved the arithmeticity of the image of the Burau representation $\rho_n$ specialized at the $d$-th roots of unity when $n\geq 2d+1.$

\end{document}